\newtheorem{theorem}{Theorem}
\newtheorem{lemma}[theorem]{Lemma}
\newtheorem{corollary}[theorem]{Corollary}
\newtheoremstyle{example}{\topsep}{\topsep}%
{}
{}
{\bfseries}
{.}
{2mm}
{\thmname{#1}\thmnumber{ #2}\thmnote{ #3}}
\theoremstyle{example}
\newcommand{\R}{\mathbb{R}}
\newcommand{\C}{\mathbb{C}}
\newcommand{\D}{\mathbb{D}}
\renewcommand{\Re}{\mathop\mathrm{Re}}
\renewcommand{\phi}{\varphi}
\title{Variation of Green Functions: Normal Derivatives}
\date{\today}
\author{Charles Z. Martin}
\begin{document}

\maketitle

\begin{abstract}
  We derive a variational formula for the outward normal derivative of the Green
  function for the Schr\"odinger and Laplace--Beltrami operators, viewed as perturbations of the Laplacian.
  As an application we begin to characterize elliptic growth---the growth of a domain
  pushed outward by its own Green function.
\end{abstract}

\maketitle
\section{Introduction}

The outward normal derivative of the Green function of a domain is of central importance to
  boundary--value problems associated to the underlying elliptic operator. Given a domain $D\subset \C$, a point $w\in D$,
  and an elliptic differential operator $L$, we define the Green function $g_w: \overline{D}-\{w\} \to \R$ as the solution (if it exists)
  to
  \begin{equation*}
    \begin{cases}
      Lg_w = \delta_w & \textrm{ in } D \\
      g_w = 0 & \textrm{ on }\partial D,
    \end{cases}
  \end{equation*}
  where $\delta_w$ denotes the Dirac delta supported at $w$. Occasionally we will denote
  $g_w(z)$ by $g(z,w)$, emphasizing the functional dependence upon both inputs. The existence of a Green function for a domain is tied to the regularity of the
  boundary. We will work only with smooth and analytic boundaries, which guarantees existence of Green functions and allows use of Green's
  theorem.
  
  The Green function is used to solve the inhomogeneous boundary--value problem for $L$ with zero boundary data. That is, a solution to
  \begin{equation*}
    \begin{cases}
      Lu = f & \textrm{ in } D \\
      u = 0 & \textrm{ on }\partial D,
    \end{cases}
  \end{equation*}
  is given by
  \begin{equation*}
    u(w) = \int_D g_w f\, dA.
  \end{equation*}
  Of equal importance is the homogeneous boundary--value problem
  \begin{equation} \label{dirichlet}
    \begin{cases}
      Lu = 0 & \textrm{ in } D \\
      u = f & \textrm{ on }\partial D.
    \end{cases}
  \end{equation}
  In some situations, the Green function can be used to extract a solution to this problem as well. In the case of the Laplacian, $L=\Delta$,
  a solution to \eqref{dirichlet} is given by
  \begin{equation*}
    u(w) = \int_{\partial D} f \partial_n g_w \, ds,
  \end{equation*}
  where $\partial_n$ denotes the outward normal derivative. The integral kernel $P_\zeta(w) = \partial_n g_w(\zeta)$ is called the Poisson
  kernel of the region $D$, and for simplicity of notation we sometimes use $P_\zeta(w)$ in place of $\partial_n g_w(\zeta)$, especially
  when we wish to view $\zeta$ as a fixed parameter.
  
  If instead we consider an operator in divergence form, $L=\nabla\lambda\nabla$,
  then a solution to \eqref{dirichlet} is given by
  \begin{equation*}
    u(w) = \int_{\partial D} f \lambda \partial_n g_w \, ds.
  \end{equation*}
  The normal derivative of the Green function also appears in the dynamics of moving boundaries. In \cite{Khavinson}
  the authors define a process wherein a domain grows with a boundary velocity determined by the outward derivative of its own Green
  function. This so--called elliptic growth is used to model real processes such as electrodeposition, crystal formation, and quasi--static
  fluid flowing through a porous medium. When the underlying operator is the Laplacian the growth is commonly called Laplacian growth
  or Hele--Shaw flow---see \cite{Mineev} for a survey. When the underlying operator is more general however,
  much less work has been done. It is known for example that lemniscates cannot survive a Laplacian growth process (see \cite{Khavinson2}),
  but a carefully--chosen elliptic growth might preserve them. To approach such problems we need an understanding of how the Green function
  depends upon the operator.
  
  Compared to that of a general elliptic operator,
  the Green function of the Laplacian is well--understood. In \cite{Martin} the author develops formulas for Green functions
  under various perturbations of the Laplacian; here we follow the same paradigm, studying the normal derivative of the Green function
  under a perturbation of the operator.

\section{Variation of the Normal Derivative}
\subsection{Schr\"odinger}

Our starting point is the variational formula derived in \cite{Martin}.
Consider a bounded domain $D$ with smooth, analytic boundary,
a positive function $u\in C^\infty(\overline{D})$
and $z,w\in D$. For $\epsilon > 0$ the Green function $g^*_w$ of $\Delta-\epsilon u$ satisfies
\begin{equation}
  g^*_w(z) = g_w(z) + \epsilon \int_D u(\xi)g(z,\xi)g(w,\xi)\, dA(\xi) + o(\epsilon) \label{Schrodinger}
\end{equation}
as $\epsilon\to 0$,
where $g_w$ denotes the Green function for the Laplacian and
the error term converges uniformly in $z$ for each fixed $w$. Furthermore, a full series expansion is given by
\begin{equation*}
  g^*_w = \sum_{n=0}^\infty (TM)^n g_w,
\end{equation*}
where we've used the operators $M: \phi\mapsto u\phi$ and
\begin{equation*}
  T:\phi \mapsto \int_D g_w\phi\, dA.
\end{equation*}
To derive a similar formula for $\partial_n g^*_w$ our major tool is the following lemma.

\begin{lemma}\label{NormalLemma}
  Let $D\subset \C$ be a bounded domain with smooth, analytic boundary and $f\in C^1(\overline{D})\cap C^2(D)$.
  Suppose further that $f=0$ on $\partial D$. Then for $\zeta\in \partial D$ we have
  \begin{equation*}
    \frac{\partial f}{\partial n}(\zeta) = \int_D \Delta f(z)P(z,\zeta)\, dA(z),
  \end{equation*}
  where $P(z,\zeta) = \partial_n g_z(\zeta)$ is the Poisson kernel of $D$.
\end{lemma}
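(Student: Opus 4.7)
The plan is to first obtain a Green-function representation of $f$ on $D$, and then extract the outward normal derivative at a boundary point by differentiating through the integral.

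For the representation, I would apply Green's second identity to the pair $(f, g_w)$ on the perforated domain $D \setminus B_\epsilon(w)$ for $w \in D$ fixed. Since $\Delta g_w$ vanishes off the pole, the volume term reduces to $-\int g_w\,\Delta f\,dA$; the boundary integrals over $\partial D$ vanish because both $f$ and $g_w$ are zero there; and as $\epsilon \to 0^+$ the logarithmic behavior $g_w(z) \sim \tfrac{1}{2\pi}\log|z-w|$ near the pole makes the integral over $\partial B_\epsilon(w)$ converge to $-f(w)$. Collecting signs gives the standard representation
$$f(w) = \int_D g(z,w)\,\Delta f(z)\,dA(z), \qquad w \in D. \qquad (\ast)$$

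Next, let $w = \zeta - tn$ with $t > 0$ small and $n$ the outward unit normal at $\zeta$. Because $f(\zeta) = 0$ and $g(z,\zeta) = 0$ for $z \in D$,
$$\partial_n f(\zeta) \;=\; -\lim_{t \to 0^+}\frac{f(\zeta - tn)}{t} \;=\; -\lim_{t \to 0^+}\int_D \frac{g(z,\zeta - tn) - g(z,\zeta)}{t}\,\Delta f(z)\,dA(z).$$
For each fixed $z \in D$ the map $w \mapsto g(z,w)$ is smooth up to $\partial D$ away from $z$ (by Schwarz reflection across the analytic boundary), so the difference quotient converges pointwise to $-P(z,\zeta)$, and moving the limit under the integral delivers the lemma.

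The main obstacle is justifying this limit exchange, since $\Delta f$ need not be bounded up to $\partial D$. I would first establish the identity for $f \in C^2(\overline D)$ with $f|_{\partial D} = 0$, where $\Delta f$ is bounded and the mean value theorem applied in the second variable bounds the difference quotient by $\sup_{s \in [0,t]}|\partial_n g(z,\cdot)(\zeta - sn)|$, which is at most $2P(z,\zeta)$ for $t$ sufficiently small. Since $P(\cdot,\zeta)$ is integrable on $D$ (its only singularity is of order $|z-\zeta|^{-1}$ in two dimensions, as one sees already on the disk), dominated convergence closes the case. The stated regularity $f \in C^1(\overline D) \cap C^2(D)$ is then recovered by approximation, for instance by contracting $D$ slightly and mollifying to obtain a sequence of $C^2(\overline D)$-functions vanishing on the boundary, then passing to the limit on both sides of $(\ast)$ and of the desired identity using continuity of $P(\cdot,\zeta)$ on the interior.
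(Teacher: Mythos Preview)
Your overall strategy---represent $f$ via the Green function and then differentiate in the second variable toward the boundary---is natural and different from the paper's, but the domination step has a real gap. The bound
\[
\sup_{s\in[0,t]}\bigl|\partial_n g(z,\cdot)(\zeta-sn)\bigr|\le 2P(z,\zeta)
\]
fails for $z$ close to the segment $\{\zeta-sn:0\le s\le t\}$, because $g(z,\cdot)$ has a logarithmic pole along that segment. On the upper half-plane with $\zeta=0$ and interior point $it$, one has
\[
\Bigl|\frac{g(z,it)}{t}\Bigr|=\frac{1}{2\pi t}\Bigl|\log\frac{|z-it|}{|z+it|}\Bigr|\xrightarrow[z\to it]{}+\infty,
\qquad\text{while}\qquad
P(z,0)=\frac{y}{\pi|z|^{2}}\le \frac{1}{\pi t}\ \text{ near } z=it,
\]
so no multiple of $P(\cdot,\zeta)$ dominates the difference quotient. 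The argument can be rescued by splitting into $\{|z-\zeta|>2t\}$, where your mean-value bound does give $\lesssim P(z,\zeta)$, and a shrinking disk about $\zeta$, whose contribution is $O(t\log(1/t))\to 0$; but this is genuine extra work your write-up does not supply. The final approximation step is also too sketchy: contracting the domain and mollifying destroys the boundary condition $f=0$, and you give no mechanism for controlling $\int_D(\Delta f_\varepsilon-\Delta f)P_\zeta\,dA$ when $\Delta f$ is merely $C(D)$.

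The paper avoids all of this by testing the identity weakly rather than pointwise. For an arbitrary measurable $A\subset\partial D$ it solves the Dirichlet problem $\Delta u=0$, $u|_{\partial D}=\chi_A$; Green's identity applied to the pair $(u,f)$ gives $\int_A\partial_n f\,ds=\int_D u\,\Delta f\,dA$. Writing $u(z)=\int_A P(z,\zeta)\,ds(\zeta)$ and interchanging the order of integration yields
\[
\int_A\partial_n f\,ds=\int_A\Bigl(\int_D \Delta f(z)\,P(z,\zeta)\,dA(z)\Bigr)\,ds(\zeta)
\]
for every $A$, hence equality almost everywhere on $\partial D$, hence everywhere by continuity. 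The trade-off is clear: your route is the direct ``differentiate under the integral'' computation, conceptually transparent but analytically delicate near the boundary singularity of $P_\zeta$; the paper's duality argument uses only a bounded harmonic test function and Fubini, so the singular behaviour of the Poisson kernel never has to be confronted.
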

\begin{proof}
  Let $A\subset \partial D$ be measurable with respect to arc length on $\partial D$.
  We define an auxiliary function $u$ which solves the Dirichlet problem
  \begin{equation*}
    \begin{cases}
      \Delta u = 0 & \textrm{ in } D \\
      u = \chi_A & \textrm{ on } \partial D
    \end{cases}
  \end{equation*}
  where $\chi_A$ denotes the characteristic function of $A$. From the Green identity
  \begin{equation}
    \int_{\partial D} \left( u\frac{\partial f}{\partial n} - f\frac{\partial u}{\partial n}\right)\, ds
      = \int_D \left(u\Delta f - f\Delta u\right)\, dA \label{BeltramiIdentity}
  \end{equation}
  we use the assumptions on $u$ and $f$ to find
  \begin{equation}
    \int_{A} \frac{\partial f}{\partial n}\, ds = \int_D u\Delta f\, dA. \label{PoissonLemma}
  \end{equation}
  Given $z\in D$ we return to the Green identity \eqref{BeltramiIdentity} with the functions $g_z$ and $u$. It follows that
  \begin{equation*}
    u(z) = \int_{\partial D} u\frac{\partial g_z}{\partial n}\, ds = \int_A \frac{\partial g_z}{\partial n}\, ds.
  \end{equation*}
  Inserting this into \eqref{PoissonLemma} and swapping the order of integration gives
  \begin{equation*}
    \int_{A} \frac{\partial f}{\partial n}\, ds = \int_A \left[\int_D \Delta f(z)  \frac{\partial g}{\partial n}(z,\zeta) \, dA(z)\right] \, ds(\zeta).
  \end{equation*}
  Since $A$ is arbitrary, we conclude
  \begin{equation*}
    \frac{\partial f}{\partial n}(\zeta) = \int_D \Delta f(z)\frac{\partial g}{\partial n}(z,\zeta)\, dA(z)
  \end{equation*}
  almost everywhere with respect to arc length.
  Our assumptions on $f$ imply that each side of this equation is a continuous function of $\zeta$.
  Hence the equation in fact holds everywhere.
\end{proof}

Before we can proceed with to the main result, we need to ascertain the regularity of the relevant Green functions. To use
lemma \ref{NormalLemma} we need to be sure the Green function for Schr\"odinger operators are continuously differentiable in some neighborhood
of the closure of the domain.

\begin{lemma}
  Suppose that $D\subset \C$ is a bounded domain with smooth, analytic boundary. Then the Green function of the Laplacian on $D$ is analytic in a neighborhood of $\overline{D}$ except at the
  singularity.
\end{lemma}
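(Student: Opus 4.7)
The plan is to reduce to a local statement at each point of $\overline{D}$ and invoke the Schwarz reflection principle across each analytic boundary arc. Interior analyticity is essentially free: on $D\setminus\{w\}$ the function $g_w$ is harmonic, and $g_w(z) - \frac{1}{2\pi}\log|z-w|$ extends harmonically through $w$, so $g_w$ is already real-analytic throughout $D\setminus\{w\}$. The content of the statement is therefore regularity up to and across $\partial D$ (excluding the singularity $w$).

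Fix $\zeta_0\in\partial D$ with $\zeta_0\ne w$. Since $\partial D$ is analytic near $\zeta_0$, I obtain a biholomorphism $\psi:B\to V$ where $B$ is a disk about $0\in\C$ and $V\subset\C$ is a neighborhood of $\zeta_0$, sending $B\cap\R$ onto $V\cap\partial D$ and the upper half-disk $B^+$ onto $V\cap D$. The pullback $h=g_w\circ\psi$ is harmonic on $B^+$, continuous up to $B\cap\R$ by the standard boundary regularity of Green functions on smooth domains, and vanishes there since $g_w=0$ on $\partial D$. The classical Schwarz reflection principle then extends $h$ to a harmonic function on all of $B$ by the odd reflection $h(x,-y)=-h(x,y)$, and pushing this extension forward by $\psi$ yields a harmonic, hence real-analytic, extension of $g_w$ to all of $V$.

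To finish, I cover the compact set $\partial D$ by finitely many such neighborhoods $V_j$ and take their union with $D\setminus\{w\}$ to produce an open set $U\supset\overline{D}\setminus\{w\}$ on which $g_w$ is real-analytic. The main obstacle is the reflection step, whose two hypotheses are exactly what the setup supplies: analyticity of $\partial D$ gives the local straightening biholomorphism $\psi$, and the smoothness of $\partial D$ (together with $\zeta_0\ne w$) gives the continuity of $g_w$ up to the arc needed to apply reflection. A minor technical point to verify is the orientation of $\psi$, namely that we may arrange for $B^+$ to map into $D$ rather than its exterior; this is achieved by composing with complex conjugation in $B$ if necessary.
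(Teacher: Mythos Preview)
Your argument is essentially the paper's: locally straighten the analytic boundary via a conformal map, apply Schwarz reflection to the pulled-back Green function, push forward, and patch. The one step you omit and the paper carries out explicitly is checking that the local extensions agree on the overlaps $V_j\cap V_k$ lying outside $\overline{D}$; this is easy to supply (for instance via the identity theorem for real-analytic functions, after arranging the overlaps to be connected and to meet $D$), but it should be mentioned, since without it you do not yet have a single well-defined extension on $U$.
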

\begin{proof}
  Fix $w\in D$. Since $\partial D$ is smooth analytic, for each $\zeta\in\partial D$ there is a neighborhood $N_\zeta$ of $\zeta$
  and a conformal map $\phi:N_\zeta\to \D$ so that $\phi(N_\zeta\cap D) = \D^+$, the upper half--disk. Taking the union of all such
  $N_\zeta$ and $D$
  gives $\tilde{D}$, a neighborhood of $\overline{D}$. We can extend $g_w$ to $\tilde{D}$ as follows.
  
  Given $\zeta\in \partial D$ find the corresponding neighborhood $N_\zeta$ and
  conformal map $\phi$. Since $g_w\circ \phi^{-1}$ is harmonic on the upper--half disk, 0 on the interval $(-1,1)$,
  and continuous up to the real axis, the reflection principle for harmonic functions
  allows us to extend $g_w\circ \phi^{-1}$ to a harmonic function on $\D$ via the definition
  \begin{equation*}
    g_w\circ \phi^{-1} (z) = -g_w\circ \phi^{-1} (\overline{z})
  \end{equation*}
  whenever $z\in \D$ is below the real axis.
  Then $g_w = (g_w\circ \phi^{-1})\circ \phi$ extends to
  a harmonic function on $D \cup N_\zeta$. Now suppose that $z\in\C-\overline{D}$ is
  a point in both $N_\zeta$ and $N_\xi$; we need to show that the
  extensions arising from $\zeta$ and $\xi$ agree at $z$.
  Consider the conformal maps $\phi: N_\zeta\to \D$ and $\psi: N_\xi\to \D$. Then $\psi\circ\phi^{-1}$ is a map
  preserving the unit disk and the real line; hence for some real $a\in (-1,1)$ we can define $M_a:\D\to\D$ as
  \begin{equation*}
    M_a(t) = \frac{t-a}{1-at}
  \end{equation*}
  so that $\psi\circ \phi^{-1}=M_a$. We can rewrite the extension of $g_w$ resulting from $N_\xi$ as
  \begin{align*}
    (g_w\circ \psi^{-1}) (\overline{\psi(z)}) &= (g_w \circ \phi^{-1}\circ M_a^{-1})(\overline{M_a(\phi(z))}) \\
    &= (g_w \circ \phi^{-1}\circ M_a^{-1})(M_a(\overline{\phi(z)})) \\
    &= (g_w \circ \phi^{-1})(\overline{\phi(z)}),
  \end{align*}
  as desired. Since $g_w$ is harmonic away from the singularity, it is analytic there.
\end{proof}

\begin{lemma}
  Suppose that $D\subset \C$ is a bounded domain with smooth, analytic boundary.
  Let $u$ be a positive, smooth function in a neighborhood of $\overline{D}$.
  Then the Green function of $D$ for the operator $\Delta - u$ is
  smooth in a neighborhood of $\overline{D}$ except at the
  singularity. In particular, the Green function satisfies the conditions of lemma \ref{NormalLemma}.
\end{lemma}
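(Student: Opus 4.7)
The plan is to isolate the singularity of $g^*_w$ by subtracting off the Laplace Green function and then invoke standard elliptic regularity on the smooth remainder. Set $h_w := g^*_w - g_w$. Using $\Delta g_w = \delta_w$ and $(\Delta - u) g^*_w = \delta_w$, a short computation shows that $h_w$ satisfies the inhomogeneous Dirichlet problem
\begin{equation*}
  (\Delta - u) h_w = u g_w \textrm{ in } D, \qquad h_w = 0 \textrm{ on } \partial D.
\end{equation*}
The right--hand side $u g_w$ vanishes on $\partial D$, is smooth on $\overline{D}\setminus\{w\}$, and carries only a logarithmic singularity at the interior point $w$.

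Next I would run a boundary regularity bootstrap for $h_w$. Since $ug_w\in L^p(D)$ for every finite $p$, Calder\'on--Zygmund theory gives $h_w\in W^{2,p}(D)$ for every $p$, hence $h_w\in C^{1,\alpha}(\overline{D})$. In any collar of $\partial D$ bounded away from $w$ the forcing term is in fact smooth, so combined with the analytic boundary and the smooth zeroth--order coefficient of $\Delta - u$, Schauder estimates up to the boundary iterate to give $h_w\in C^\infty(\overline{D}\setminus\{w\})$.

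It remains to extend smoothly across $\partial D$. The previous lemma already produces an analytic extension of $g_w$ to a neighborhood of $\overline{D}$ away from $w$. For $h_w$, the Seeley extension theorem, applicable because $\overline{D}$ is a smooth manifold with boundary, yields a smooth extension to some neighborhood of $\overline{D}$. Adding, $g^*_w = g_w + h_w$ extends smoothly to a neighborhood of $\overline{D}\setminus\{w\}$; in particular $g^*_w\in C^1(\overline{D})\cap C^2(D\setminus\{w\})$ vanishes on $\partial D$, and so the hypotheses of Lemma \ref{NormalLemma} are satisfied.

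The main technical hurdle is the boundary bootstrap in the second step: although $u g_w$ is only globally $L^p$, its restriction to any boundary collar disjoint from $w$ is smooth, and this is precisely what permits the Schauder iteration to continue all the way up to $\partial D$. Without this observation, the bootstrap would stall at the $C^{1,\alpha}$ level and the conclusion of Lemma \ref{NormalLemma} could not be drawn.
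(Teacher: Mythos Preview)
Your argument is correct but takes a different route from the paper's. The paper never isolates $h_w$ or invokes Calder\'on--Zygmund or Schauder theory; instead it leans on the Neumann series $g^*_w=\sum_{n\ge0}(TM)^n g_w$, with $T$ the Laplace Green potential and $M$ multiplication by $u$. Because $T:H^k(D)\to H^{k+2}(D)$ and the previous lemma already extends $g_w$ analytically past $\partial D$, each summand---hence the series---extends smoothly beyond $\overline D$, and that is the entire proof. Your approach trades the series for off-the-shelf elliptic regularity plus a Seeley extension: it is more portable (no representation formula required) and makes the boundary bootstrap explicit, while the paper's argument stays inside the framework of \cite{Martin} and produces a canonical rather than an arbitrary smooth extension. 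One small slip in your last sentence: $g^*_w$ is not in $C^1(\overline D)$, since it blows up at $w$; you mean $C^1(\overline D\setminus\{w\})$. Lemma~\ref{NormalLemma} as literally stated does not apply to $g^*_w$ for this reason---the paper glosses over the same point and reads $\Delta g^*_w$ distributionally in Theorem~\ref{NormalTheorem}. Your decomposition actually handles this more cleanly: $h_w\in C^1(\overline D)$ with $\Delta h_w\in L^1(D)$, so Lemma~\ref{NormalLemma} applies directly to $h_w$, and the $g_w$ piece can be treated separately.
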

\begin{proof}
  Fix a point $w\in D$. We can define
  \begin{equation*}
    T: \phi \mapsto \int_D g_z\phi\, dA
  \end{equation*}
  which as a left inverse to the Laplacian is a map from the Sobolev space $H^k(D)$ to $H^{k+2}(D)$ for any $k\geq 0$. Define
  $M$ to be the multiplication operator arising from $u$. Since $u$ is smooth in a neighborhood of $\overline{D}$,
  the operator $M$ maps $H^k(D)$ into itself. It was shown in \cite{Martin} that
  \begin{equation}
    g^*_w = \sum_{n=0}^\infty (TM)^n g_w \label{series}
  \end{equation}
  holds throughout $D$. Since $g_w$ is smooth in $D-\{w\}$, so is $g^*_w$. Furthermore, since $g_w$ extends smoothly
  into a neighborhood of $\overline{D}$ excluding $w$, equation \eqref{series} gives a smooth extension of $g^*_w$ into the same domain.
\end{proof}

\begin{theorem}\label{NormalTheorem}
  Let $D \subset \C$ be a bounded domain with smooth, analytic boundary and fix $w\in D$.
  Suppose that $u\in C^\infty(\overline{D})$ is a positive function.
  The outward normal derivative of the Green function $g^*$ of the Schr\"odinger operator $\Delta-\epsilon u$ satisfies
  \begin{equation*}
     \partial_n g^*_w(\zeta) = \partial_n g_w(\zeta) + \epsilon \int_D u(z)g(z,w)P(z,\zeta)\, dA(z) + o(\epsilon)
  \end{equation*}
  as $\epsilon \to 0$, where the convergence of $o(\epsilon)$ is uniform in $\zeta$ for each fixed $w$.
\end{theorem}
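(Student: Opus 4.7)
The plan is to apply lemma \ref{NormalLemma} to the difference $f := g^*_w - g_w$, so that the unknown boundary quantity $\partial_n g^*_w$ is expressed as an area integral to which the variational formula \eqref{Schrodinger} can be applied.

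First I would verify the hypotheses of the lemma for $f$. Both Green functions vanish on $\partial D$, so $f$ does too. Near the pole $w$ the equation $(\Delta - \epsilon u)g^*_w = \delta_w$ forces $g^*_w$ to share the same logarithmic singularity as $g_w$ (the correction $\epsilon u g^*_w$ is locally bounded), so $f$ extends smoothly across $w$; combined with the two regularity lemmas preceding the theorem, this gives $f\in C^1(\overline{D})\cap C^2(D)$. Computing $\Delta f$ distributionally, the point masses cancel and we are left with the ordinary function $\Delta f = \epsilon u g^*_w$ on $D$. Lemma \ref{NormalLemma} therefore gives
\begin{equation*}
  \partial_n g^*_w(\zeta) - \partial_n g_w(\zeta) = \epsilon\int_D u(z) g^*_w(z) P(z,\zeta)\, dA(z).
\end{equation*}

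Substituting the uniform bound $g^*_w = g_w + O(\epsilon)$ that follows from \eqref{Schrodinger} into the integrand reproduces the claimed leading term $\epsilon\int_D u(z) g(z,w) P(z,\zeta)\,dA(z)$, with a remainder of the form of a constant times $\epsilon^2\int_D u(z) P(z,\zeta)\,dA(z)$.

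The main obstacle is to verify that this remainder is $o(\epsilon)$ uniformly in $\zeta$, which reduces to showing that $\int_D P(z,\zeta)\,dA(z)$ is bounded uniformly in $\zeta\in\partial D$. I would handle this by recognizing the integral as $\partial_n v(\zeta)$, where $v(\zeta) := \int_D g(z,\zeta)\,dA(z)$ solves $\Delta v = 1$ in $D$ with zero boundary data; the smoothness of $\partial D$ gives $v\in C^\infty(\overline{D})$, so $\partial_n v$ is continuous and hence bounded on the compact boundary. Combined with the boundedness of $u$, this supplies the required uniform estimate and finishes the proof.
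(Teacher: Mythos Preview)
Your argument is correct and follows essentially the same route as the paper: both derive the exact identity
\[
  \partial_n g^*_w(\zeta)=\partial_n g_w(\zeta)+\epsilon\int_D u(z)\,g^*_w(z)\,P(z,\zeta)\,dA(z)
\]
via lemma~\ref{NormalLemma} and then insert the expansion \eqref{Schrodinger}. Your version is in fact tidier on two points: you apply the lemma to the smooth difference $g^*_w-g_w$, which genuinely meets its $C^1(\overline{D})\cap C^2(D)$ hypotheses, whereas the paper applies it directly to $g^*_w$ and handles the pole via a $\delta_w$ term; and you explicitly verify the uniformity in $\zeta$ of the remainder by bounding $\int_D P(z,\zeta)\,dA(z)$ through the torsion function, a step the paper leaves implicit.
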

\begin{proof}
  Notice that $g^*_w$ satisfies
  \begin{equation*}
    \begin{cases}
      \Delta g_w^* = \delta_w + \epsilon ug_w^* & \textrm{ in } D \\
      g_w^* = 0 & \textrm{ on } \partial D
    \end{cases}
  \end{equation*}
  Using lemma \ref{NormalLemma} gives
  \begin{align}
    \partial_n g^*_w(\zeta) &= \int_D \left(\delta_w + \epsilon u(z)g_w^*(z)\right) P(z,\zeta)\, dA(z) \nonumber \\
      &= \partial_n g(\zeta,w) + \epsilon \int_D u(z)g_w^*(z) P(z,\zeta)\, dA(z), \label{SchrodingerNormalExact}
  \end{align}
  where we have used the fact that $P(z,\zeta) = \partial_n g_z(\zeta)$.
  Inserting the perturbation formula of equation \eqref{Schrodinger} into \eqref{SchrodingerNormalExact} gives
  \begin{equation*}
    \partial_n g_w^*(\zeta) = \partial_n g(\zeta,w) + \epsilon \int_D u(z)g_w(z) P(z,\zeta)\, dA + o(\epsilon). \qedhere
  \end{equation*}
\end{proof}
We remark that this formula can be easily derived from formal manipulation; beginning with equation \eqref{Schrodinger}
take the outward normal derivative in the $z$ variable at a point $\zeta$ on the boundary. Assuming $o(\epsilon)$ remains small and
the derivative commutes with the integral, we produce the result.
The above proof actually produces an expression for the second variation of $\partial_n g^*_w$, which we give here as a corollary.
\begin{corollary}
  Under the same conditions as the previous theorem, the second variation of $\partial_n g^*_w$ is given by
  \begin{equation*}
    \delta^2 \partial_n g^*_w(\zeta) = \int_D\int_D u(z) u(\xi) g(\xi,w) g(\xi,z) P(z,\zeta)\, dA(\xi) dA(z).
  \end{equation*}
\end{corollary}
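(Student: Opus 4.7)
The plan is to exploit the exact (not just asymptotic) identity produced inside the proof of Theorem~\ref{NormalTheorem}, namely
\begin{equation*}
  \partial_n g^*_w(\zeta) = \partial_n g(\zeta,w) + \epsilon \int_D u(z)g_w^*(z) P(z,\zeta)\, dA(z),
\end{equation*}
and iterate it. The theorem obtains the first variation by replacing $g^*_w$ on the right with $g_w + O(\epsilon)$; to extract the second variation I simply push the expansion one term further.

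First I would invoke the Neumann series $g^*_w = \sum_{n\geq 0}\epsilon^n (TM)^n g_w$ from the regularity lemma, or equivalently apply \eqref{Schrodinger} (as an identity modulo $o(\epsilon)$) to $g^*_w$ itself, to write
\begin{equation*}
  g^*_w(z) = g_w(z) + \epsilon \int_D u(\xi)\, g(z,\xi)\, g(w,\xi)\, dA(\xi) + o(\epsilon).
\end{equation*}
Substituting this into the exact identity above and expanding gives three pieces: the zeroth--order term $\partial_n g_w(\zeta)$, the first--order term already identified in Theorem~\ref{NormalTheorem}, and an $\epsilon^2$ term
\begin{equation*}
  \epsilon^2 \int_D\int_D u(z)\,u(\xi)\, g(z,\xi)\, g(w,\xi)\, P(z,\zeta)\, dA(\xi)\,dA(z).
\end{equation*}
Using Fubini (justified by the $L^1$ bounds that come with the Green function on a smooth analytic domain) and the symmetry $g(z,\xi)=g(\xi,z)$ of the Laplacian's Green function rewrites this as the stated formula. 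Reading off the coefficient of $\epsilon^2$ then gives $\delta^2 \partial_n g^*_w(\zeta)$.

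The only delicate point, and really the main obstacle, is verifying that the remainder is genuinely $o(\epsilon^2)$ uniformly in $\zeta\in\partial D$. The $o(\epsilon)$ control in \eqref{Schrodinger} is uniform in the interior variable $z$, so after multiplying by $u(z)P(z,\zeta)$ and integrating the remainder is bounded by $o(\epsilon)\cdot \epsilon \cdot \|u\|_\infty \int_D P(z,\zeta)\,dA(z)$; since $\int_D P(\cdot,\zeta)\,dA$ is bounded independently of $\zeta$ on a smooth analytic domain (it equals $\int_D \partial_n g_z(\zeta)\,dA(z)$, which is a continuous function of $\zeta\in\overline{D}$), this yields an $o(\epsilon^2)$ bound uniform in $\zeta$. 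Once this is established, identifying the $\epsilon^2$ coefficient with $\delta^2 \partial_n g^*_w(\zeta)$ is a matter of convention and the corollary follows.
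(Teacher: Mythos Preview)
Your proposal is correct and follows exactly the route the paper intends: the paper does not write out a separate proof but remarks that ``the above proof actually produces an expression for the second variation,'' meaning precisely that one should take the exact identity \eqref{SchrodingerNormalExact} and insert the expansion \eqref{Schrodinger} one order further, which is what you do. Your additional care with the uniform $o(\epsilon^2)$ remainder (via $\|u\|_\infty$ and the boundedness of $\int_D P(\cdot,\zeta)\,dA$ in $\zeta$) makes explicit a detail the paper leaves to the reader.
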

As a further corollary, we can deduce a monotonicity result.
\begin{corollary}
  Under the same conditions as the previous theorem, the first variation $\delta\partial_n g^*_w(\zeta)$ is nonpositive at each
  point of $\partial D$.
\end{corollary}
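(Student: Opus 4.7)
The plan is to read the sign of each factor in the first variation
\begin{equation*}
\delta\partial_n g^*_w(\zeta) = \int_D u(z) g(z,w) P(z,\zeta)\, dA(z)
\end{equation*}
supplied by Theorem \ref{NormalTheorem}, and conclude that the integrand is pointwise nonpositive. Since $u$ is positive by hypothesis, everything reduces to controlling the signs of $g(z,w)$ and $P(z,\zeta)$.

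First I would observe that with the paper's sign convention $\Delta g_w = \delta_w$, the Green function has the local expansion $g_w(z) \sim \frac{1}{2\pi}\log|z-w|$ near $w$, so it tends to $-\infty$ at the singularity while vanishing on $\partial D$. Applied to the harmonic function $g_w$ on $D\setminus\{w\}$ (after excising a small disk around $w$ on whose boundary $g_w$ is very negative), the maximum principle gives $g_w \leq 0$ throughout $D$. Consequently $g(z,w) \leq 0$ for all $z \in D$.

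Next I would use this negativity together with $g_z = 0$ on $\partial D$ to sign the Poisson kernel. Since $g_z \leq 0$ in $D$ and vanishes on the smooth analytic boundary, the directional derivative of $g_z$ at $\zeta$ in the inward direction is $\leq 0$ (in fact $< 0$ by Hopf's lemma), so that
\begin{equation*}
P(z,\zeta) = \partial_n g_z(\zeta) \geq 0
\end{equation*}
at every boundary point $\zeta$. Combining the three sign facts, the integrand $u(z) g(z,w) P(z,\zeta)$ is $\leq 0$ at every $z \in D$, so the integral is nonpositive.

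There is essentially no obstacle here beyond being careful with the sign convention for $g_w$: a reader used to the opposite convention $\Delta g_w = -\delta_w$ might expect $g_w \geq 0$ and the Poisson kernel to arise with the opposite sign, but the two sign flips cancel in the product $g(z,w) P(z,\zeta)$, so the conclusion is convention-independent. The only mild care needed is that the maximum principle is applied on $D\setminus B(w,r)$ and then $r$ is sent to $0$, using the logarithmic blowup of $g_w$ at $w$ to guarantee the boundary values on $\partial B(w,r)$ are negative for all small $r$.
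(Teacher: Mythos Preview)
Your argument is correct and is essentially the paper's own proof: the paper simply asserts $P(z,\zeta)\geq 0$, $u(z)\geq 0$, and $g(z,w)\leq 0$, and concludes that the integrand in Theorem~\ref{NormalTheorem} is nonpositive. You supply the maximum-principle and Hopf-lemma justifications that the paper leaves implicit, but the route is the same.
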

\begin{proof}
  For any $z,w\in D$ and $\zeta\in \partial D$ we have $P(z,\zeta), u(z) \geq 0$ and $g(z,w)\leq 0$. Thus
  \begin{equation*}
    \int_D u(z)g(z,w) P(z,\zeta) \, dA(z) \leq 0.
  \end{equation*}
  The result follows from theorem \ref{NormalTheorem}.
\end{proof}

\subsection{Laplace--Beltrami}

Having completed the analysis of the Schr\"odinger case, we turn to a different perturbation of the Laplacian.
As before, consider a bounded domain $D\subset\C$ with smooth, analytic boundary and $u\in C^\infty(\overline{D})$. Given
$\epsilon>0$ define $\lambda=1+\epsilon u$ and the Laplace--Beltrami operator $L=\nabla\lambda\nabla$.
Let $g^*_w$ denote the Green function for $L$ with singularity at $w\in D$; we wish to derive a perturbation formula
for $\partial_n g^*_w(\zeta)$ when $\epsilon$ is close to 0. The result we obtained for the Schr\"odinger operator
turned out to be the obvious one that results from formal manipulation. In this vein, we can anticipate a Laplace--Beltrami analogue
by using the following perturbation formula from \cite{Martin}:
\begin{equation}
  g^*_w(z) = g_w(z) + \epsilon\int_D u\nabla g_z\cdot \nabla g_w\, dA + o(\epsilon)
\end{equation}
as $\epsilon \to 0$, where $w,z\in D$ and $o(\epsilon)$ converges uniformly in $z$ for each fixed $w$.
Let's take a normal derivative in $z$ at the point $\zeta\in \partial D$, assuming that $o(\epsilon)$ remains small and derivative commutes
with integral. We are lead to the formula
\begin{equation*}
   \partial_n g^*_w(\zeta) = \partial_n g_w(\zeta) + \epsilon\int_D u\nabla P_\zeta\cdot \nabla g_w\, dA + o(\epsilon).
\end{equation*}
Unfortunately this integral might diverge, but formal manipulation and integration by parts leads us to the following theorem.
\begin{theorem}
  Let $D \subset \C$ be a bounded domain with smooth, analytic boundary and fix $w\in D$.
  Suppose that $u\in C^\infty(\overline{D})$ is a positive function and define $\lambda(z) = 1+\epsilon u(z)$ for $\epsilon > 0$.
  The outward normal derivative of the Green function $g^*$ of the Laplace--Beltrami operator $L=\nabla\lambda\nabla$ satisfies
  \begin{equation*}
     \partial_n g^*_w(\zeta) =
     \partial_n g_w(\zeta) + \frac{\epsilon}{2}\left[\int_D \Delta u g_w P_\zeta\, dA - \partial_n g_w(\zeta)\left[u(\zeta)+u(w)\right]
      \right] + o(\epsilon)
  \end{equation*}
  as $\epsilon \to 0$, where the convergence of $o(\epsilon)$ is uniform in $\zeta$ for each fixed $w$.
\end{theorem}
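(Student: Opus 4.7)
The strategy echoes that of Theorem \ref{NormalTheorem}. Expanding $\nabla\lambda\nabla g^*_w = \delta_w$ gives
$$\Delta g^*_w = \frac{\delta_w - \epsilon\nabla u\cdot\nabla g^*_w}{1+\epsilon u},$$
and, assuming sufficient regularity of $g^*_w$ near $\overline{D}\setminus\{w\}$ (established as in the Schr\"odinger case via the divergence-form series for $g^*_w$), Lemma \ref{NormalLemma} applied in the same distributional spirit used in Theorem \ref{NormalTheorem} yields
$$\partial_n g^*_w(\zeta) = \frac{\partial_n g_w(\zeta)}{1+\epsilon u(w)} - \epsilon\int_D \frac{\nabla u\cdot\nabla g^*_w}{1+\epsilon u}\,P_\zeta\,dA.$$
Expanding $1/(1+\epsilon u(w)) = 1-\epsilon u(w) + O(\epsilon^2)$ and inserting $g^*_w = g_w + O(\epsilon)$ into the integral produces the first-order expansion
$$\partial_n g^*_w(\zeta) = \partial_n g_w(\zeta) - \epsilon u(w)\partial_n g_w(\zeta) - \epsilon\int_D\nabla u\cdot\nabla g_w\,P_\zeta\,dA + o(\epsilon).$$
Note that unlike the formal derivative computed in the excerpt, the integral here contains $\nabla g_w$ rather than $\nabla P_\zeta$, and is therefore convergent.

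The key remaining step converts the gradient integral via the distributional product rule
$$\Delta(ug_w) = (\Delta u)g_w + 2\nabla u\cdot\nabla g_w + u(w)\delta_w.$$
Since $ug_w$ vanishes on $\partial D$, applying Lemma \ref{NormalLemma} to $ug_w$ and using $\partial_n(ug_w)(\zeta) = u(\zeta)\partial_n g_w(\zeta)$ gives
$$u(\zeta)\partial_n g_w(\zeta) = \int_D[(\Delta u)g_w + 2\nabla u\cdot\nabla g_w]P_\zeta\,dA + u(w)\partial_n g_w(\zeta).$$
Solving this for $\int_D\nabla u\cdot\nabla g_w\,P_\zeta\,dA$ and substituting back into the first-order expansion produces the theorem's formula after a short algebraic rearrangement: the coefficient of $\partial_n g_w(\zeta)$ collapses from $-\epsilon u(w) - \tfrac{\epsilon}{2}(u(\zeta)-u(w))$ to exactly $-\tfrac{\epsilon}{2}(u(\zeta)+u(w))$, while $\int_D(\Delta u)g_w P_\zeta\,dA$ picks up the coefficient $+\tfrac{\epsilon}{2}$.

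The main obstacle is that both applications of Lemma \ref{NormalLemma} violate its $C^2(D)$ hypothesis, since $g^*_w$ and $ug_w$ each carry a logarithmic singularity at $w$. I would justify both by repeating the lemma's proof on the punctured domain $D\setminus B_\rho(w)$: pair the target function $f$ with the harmonic auxiliary $u_{\mathrm{aux}}$ of boundary data $\chi_A$ in Green's second identity, and pass to the limit $\rho\to 0$. The new boundary contribution over $\partial B_\rho$ converges to $-c\,u_{\mathrm{aux}}(w)$, where $c$ is the strength of the $\delta_w$ in $\Delta f$; the Poisson representation $u_{\mathrm{aux}}(w) = \int_A \partial_n g_w\,ds$ then transforms this into precisely the $c\,\partial_n g_w(\zeta)$ correction predicted by the formal distributional calculation. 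Here $c = 1/(1+\epsilon u(w))$ for the first application and $c = u(w)$ for the second.
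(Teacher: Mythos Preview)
Your argument is correct and reaches the stated formula, but it proceeds along a genuinely different path from the paper. The paper never touches the Laplace--Beltrami PDE directly: it uses the substitution $G^*_w=\sqrt{\lambda}\,g^*_w$ to recognize $\sqrt{\lambda(w)}\,G^*_w$ as the Green function of a Schr\"odinger operator $\Delta-V$ with $V=\epsilon\Delta u/2+o(\epsilon)$, applies Theorem~\ref{NormalTheorem} verbatim to obtain $\partial_n\bigl[\sqrt{\lambda(z)\lambda(w)}\,g^*_w(z)\bigr]_{z=\zeta}$, simplifies the left side using $g^*_w(\zeta)=0$, and then expands $(\lambda(\zeta)\lambda(w))^{-1/2}$ in $\epsilon$. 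Your route instead expands $\nabla\lambda\nabla g^*_w=\delta_w$, applies Lemma~\ref{NormalLemma} once to $g^*_w$ and once to the product $ug_w$, and closes with the algebraic identity you describe. The paper's approach is shorter because it recycles Theorem~\ref{NormalTheorem} wholesale; yours is more self-contained and, as you note, produces the convergent intermediate integral $\int_D(\nabla u\cdot\nabla g_w)P_\zeta\,dA$ rather than the divergent $\int_D(\nabla u\cdot\nabla P_\zeta)g_w\,dA$ that naive differentiation would suggest.

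One point in your argument deserves more care than you give it. When you write ``inserting $g^*_w=g_w+O(\epsilon)$ into the integral'' you are actually replacing $\nabla g^*_w$ by $\nabla g_w$ inside $\int_D\frac{\nabla u\cdot\nabla g^*_w}{1+\epsilon u}P_\zeta\,dA$, and the sup-norm perturbation formula from \cite{Martin} does not by itself control the gradient. You need an estimate of the form $\|\nabla g^*_w-\nabla g_w\|_{L^1(D)}=O(\epsilon)$ (uniformly in $\zeta$ the integrand also carries $P_\zeta$, so some care with the $\zeta$-singularity is needed too). This is obtainable---for instance via the series expansion and the smoothing of $T$, or by first passing through the $\sqrt{\lambda}$ substitution so that $g^*_w-g_w/\sqrt{\lambda\,\lambda(w)}$ lands in $H^2$---but it is a genuine step, not an immediate consequence of $g^*_w=g_w+O(\epsilon)$. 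The paper's route sidesteps this entirely, since after the substitution only the zeroth-order Schr\"odinger formula is invoked and no gradient of $g^*_w$ ever appears. Your handling of the distributional use of Lemma~\ref{NormalLemma} via excision of $B_\rho(w)$ is sound and in fact more explicit than what the paper does in Theorem~\ref{NormalTheorem}.
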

\begin{proof}
  For a rigorous proof, we use a well--known change of variables to reduce the problem to that of the Schr\"odinger operator.
  Since $u\in C^\infty(\overline{D})\subset L^\infty(D)$, we consider only those $\epsilon$ satisfying
  $0\leq\epsilon \leq \epsilon_0 < \|u\|^{-1}_\infty$.
  This ensures that $\lambda \geq 0$ and that all of the following linear approximations have uniformly small error terms.
  If we define the auxiliary functions $G^*_w = g^*_w\sqrt{\lambda}$ and
  \begin{equation*}
    V = \frac{\Delta\sqrt{\lambda}}{\sqrt{\lambda}},
  \end{equation*}
  then we claim $\sqrt{\lambda(w)}G^*_w$ is the Green function for $\Delta - V$.
  Verification is a straight forward computation which we omit; a full derivation is given in \cite{Martin}.
  Furthermore, the dependence of $V$ upon $\epsilon$ can be seen to be $V = \epsilon\Delta u/2 + o(\epsilon)$.
  Using theorem \ref{NormalTheorem} we obtain
  \begin{equation}
    \partial_n \bigg[\sqrt{\lambda(z)\lambda(w)}g^*_w(z)\bigg]_{z=\zeta} =
      \partial_n g_w(\zeta) + \frac{\epsilon}{2}\int_D \Delta u g_w P_\zeta\, dA + o(\epsilon). \label{LB1}
  \end{equation}
  Using $g^*_w(\zeta) = 0$ the left--hand side of this equation simplifies to
  \begin{align*}
    \partial_n \bigg[\sqrt{\lambda(z)\lambda(w)}g^*_w(z)\bigg]_{z=\zeta} &=
      g^*_w(\zeta)\partial_n \bigg[\sqrt{\lambda(z)\lambda(w)}\bigg]_{z=\zeta} + \sqrt{\lambda(\zeta)\lambda(w)}\partial_n g_w^*(\zeta) \\
      &= \sqrt{\lambda(\zeta)\lambda(w)}\partial_n g_w^*(\zeta).
  \end{align*}
  We can rewrite \eqref{LB1} as
  \begin{equation}
    \partial_n g^*_w(\zeta)=
      (\lambda(\zeta)\lambda(w))^{-1/2}
      \left[\partial_n g_w(\zeta) + \frac{\epsilon}{2}\int_D \Delta u g_w P_\zeta\, dA + o(\epsilon)\right]. \label{LB2}
  \end{equation}
  We only want first--order dependence upon $\epsilon$, so we can expand
  \begin{equation*}
    (\lambda(\zeta)\lambda(w))^{-1/2} = 1 - \epsilon\left(\frac{u(\zeta)+u(w)}{2}\right) + o(\epsilon)
  \end{equation*}
  so that \eqref{LB2} becomes
  \begin{align*}
    \partial_n g^*_w(\zeta) &=
      \left[1 - \epsilon\left(\frac{u(\zeta)+u(w)}{2}\right) + o(\epsilon)\right]
      \left[\partial_n g_w(\zeta) + \frac{\epsilon}{2}\int_D \Delta u g_w P_\zeta\, dA + o(\epsilon)\right] \\
      &= \partial_n g_w(\zeta) + \frac{\epsilon}{2}\left[\int_D \Delta u g_w P_\zeta\, dA - \partial_n g_w(\zeta)\left[u(\zeta)+u(w)\right]
      \right] + o(\epsilon),
  \end{align*}
  as desired.
\end{proof}
As we did for the Schr\"odinger operator, we can deduce another monotonicity result.
\begin{corollary}
  Under the same conditions as the previous theorem, the variation $\delta\partial_n g^*_w(\zeta)$ satisfies
  \begin{equation*}
    \int_{\partial D} \delta\partial_n g^*_w \, ds \leq 0.
  \end{equation*}
\end{corollary}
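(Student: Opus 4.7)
The plan is to start from the explicit formula for the first variation given by the theorem,
\[
\delta\partial_n g^*_w(\zeta) = \tfrac{1}{2}\left[\int_D \Delta u\, g_w P_\zeta\, dA - \partial_n g_w(\zeta)\bigl(u(\zeta)+u(w)\bigr)\right],
\]
integrate in $\zeta$ over $\partial D$, and swap the order of integration in the double integral via Fubini. Two standard identities then simplify what remains: for any interior point $z$, $\int_{\partial D} P(z,\zeta)\, ds(\zeta) = 1$, and $\int_{\partial D}\partial_n g_w\, ds = 1$. Both reduce to the divergence theorem applied to $\nabla g_z$ and $\nabla g_w$ respectively, together with $\Delta g_\xi = \delta_\xi$. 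After these substitutions the integrated expression collapses to
\[
\int_{\partial D}\delta\partial_n g^*_w\, ds = \tfrac{1}{2}\left[\int_D \Delta u\cdot g_w\, dA - \int_{\partial D} u\, \partial_n g_w\, ds - u(w)\right].
\]

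Next I would apply Green's identity to the pair $(u, g_w)$ on $D$. Because $g_w = 0$ on $\partial D$ and $\Delta g_w = \delta_w$, the identity yields
\[
\int_D \Delta u\cdot g_w\, dA = u(w) - \int_{\partial D} u\, \partial_n g_w\, ds.
\]
Substituting this back, the two occurrences of $u(w)$ cancel and the two copies of $\int_{\partial D} u\,\partial_n g_w\, ds$ combine, leaving
\[
\int_{\partial D}\delta\partial_n g^*_w\, ds = -\int_{\partial D} u\, \partial_n g_w\, ds.
\]
Since $u \geq 0$ by hypothesis and $\partial_n g_w \geq 0$ on $\partial D$ (because $g_w \leq 0$ in $D$ with $g_w = 0$ on the boundary, so $g_w$ increases in the outward normal direction), the right-hand side is non-positive, as claimed.

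The computation is short enough that there is no serious obstacle; the only point that warrants mild care is the application of Green's identity to $g_w$, which has a logarithmic singularity at $w$. This is handled by the standard exhaustion argument of excising a small disk around $w$ and passing to the limit, and is precisely the mechanism that also underlies the identity $\int_{\partial D}\partial_n g_w\, ds = 1$ used in the first step.
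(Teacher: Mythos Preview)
Your proof is correct and follows essentially the same route as the paper: integrate the variation formula over $\partial D$, use $\int_{\partial D} P_\zeta\,ds = \int_{\partial D}\partial_n g_w\,ds = 1$, and then apply Green's identity to the pair $(u,g_w)$ to collapse everything to $-\int_{\partial D} u\,\partial_n g_w\,ds \le 0$. The only discrepancy is a harmless factor: the paper's displayed computation drops the $\tfrac12$ from the variation formula and so ends with $-2\int_{\partial D} u\,\partial_n g_w\,ds$, whereas your version, which tracks the $\tfrac12$, gives $-\int_{\partial D} u\,\partial_n g_w\,ds$; the sign conclusion is the same either way.
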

\begin{proof}
  From the previous theorem,
  \begin{equation*}
    \int_{\partial D} \delta\partial_n g^*_w \, ds = \int_D \Delta u g_w\, dA - u(w) - \int_{\partial D} u\partial_n g_w\, ds,
  \end{equation*}
  where we have swapped integrals and used the fact that
  \begin{equation*}
    \int_{\partial D} \partial_n g_w \, ds = \int_{\partial D} P_\zeta(z) \, ds(\zeta) = 1
  \end{equation*}
  for each $w,z\in D$. The integral swap is valid since all integrands are of constant sign. Green's identity allows us to write
  \begin{equation*}
    \int_{\partial D} \delta\partial_n g^*_w \, ds = -2\int_{\partial D} u\partial_n g_w\, ds \leq 0,
  \end{equation*}
  since $u, \partial_n g_w \geq 0$ everywhere on $\partial D$.
\end{proof}

\section{Applications}
\subsection{The Dirichlet Problem}

For an elementary application of the Green variation, consider solving the Dirichlet problem on a domain $D$:
\begin{equation} \label{Dirichlet}
  \begin{cases}
    (\Delta-\epsilon u) \phi_\epsilon = 0 & \textrm{ in } D \\
    \phi_\epsilon = f &\textrm{ on } \partial D
  \end{cases}
\end{equation}
where $f$ is a bounded Borel function, $u\geq 0$ is smooth in a neighborhood of $\overline{D}$
and $\epsilon \geq 0$ is small. If we ignored the $\epsilon u$ term
in the problem---that is, assume $\epsilon$ is zero---we can estimate the error with the following theorem and corollary.
\begin{theorem}
  Let $D\subset \C$ be a bounded domain with smooth, analytic boundary and suppose $u\in C^\infty(\overline{D})$ is a positive function.
  For $\epsilon \geq 0$ and a bounded Borel function $f$ the solution $\phi_\epsilon$ to the Dirichlet problem \eqref{Dirichlet} satisfies
  \begin{equation*}
    \phi_\epsilon(z) = \phi_0(z) + \epsilon \int_D u \phi_0 g_z\, dA + o(\epsilon)
  \end{equation*}
  as $\epsilon \to 0$, where the error term converges uniformly in $z$.
\end{theorem}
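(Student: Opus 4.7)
The plan is to express $\phi_\epsilon$ as a Poisson--type integral of $f$ against $\partial_n g^*_z$ and then substitute the expansion from theorem \ref{NormalTheorem}.

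First I would derive the representation
\begin{equation*}
  \phi_\epsilon(z) = \int_{\partial D} f(\zeta)\, \partial_n g^*_z(\zeta)\, ds(\zeta)
\end{equation*}
by applying Green's identity to the pair $\phi_\epsilon,\ g^*_z$ on $D$ with a small disk about $z$ excised and then shrinking that disk. Since $(\Delta-\epsilon u)\phi_\epsilon = 0$ and $(\Delta-\epsilon u)g^*_z = \delta_z$, the interior contributions from the $\epsilon u$ terms cancel each other; the shrinking circle around $z$ produces $\phi_\epsilon(z)$ in the limit, while the $g^*_z = 0$ boundary condition on $\partial D$ leaves only the claimed integral.

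Next, substituting
\begin{equation*}
  \partial_n g^*_z(\zeta) = \partial_n g_z(\zeta) + \epsilon\int_D u(\xi)\, g(\xi,z)\, P(\xi,\zeta)\, dA(\xi) + o(\epsilon)
\end{equation*}
from theorem \ref{NormalTheorem} and integrating against $f\, ds(\zeta)$, the leading term yields $\phi_0(z) = \int_{\partial D} f\, \partial_n g_z\, ds$. For the $\epsilon$--term, Fubini (justified since $u, P \geq 0$ and $f$ is bounded) lets us swap the order of integration so that
\begin{equation*}
  \int_{\partial D} f(\zeta)\, P(\xi,\zeta)\, ds(\zeta) = \phi_0(\xi)
\end{equation*}
is recognized as the classical Poisson solution. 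Combined with the symmetry $g(\xi,z) = g_z(\xi)$, this produces exactly the asserted $\epsilon \int_D u\phi_0 g_z\, dA$.

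The main obstacle is the uniformity of the error in $z$. Theorem \ref{NormalTheorem} gives $o(\epsilon)$ uniform in $\zeta$ for each fixed base point, so integration against $f$ on the compact set $\partial D$ yields a remainder bounded by $\|f\|_\infty|\partial D|$ times that estimate, leaving $z$--uniformity to be earned. To promote the result to uniformity in $z \in D$ one must verify that the implicit constants inherited from the Schr\"odinger formula \eqref{Schrodinger} and the Neumann series \eqref{series} depend continuously and boundedly on the base point; this follows from the symmetry $g(z,w) = g(w,z)$, standard local estimates on $g_z$, and the fact that $\int_D P(\xi,\zeta)\, dA(\xi)$ is bounded independently of $\zeta$. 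The only delicate point is the behavior as $z$ approaches $\partial D$, but boundedness of $\phi_0$ and continuous dependence of the Poisson representation on $z$ up to the boundary (at continuity points of $f$, and in $L^\infty$ otherwise) supply the needed control.
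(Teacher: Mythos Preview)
Your argument is correct but takes a genuinely different route from the paper. The paper never invokes theorem~\ref{NormalTheorem}; instead it observes that $\phi_\epsilon-\phi_0$ solves the \emph{inhomogeneous} problem
\[
  (\Delta-\epsilon u)(\phi_\epsilon-\phi_0)=\epsilon u\phi_0 \quad\text{in }D,\qquad \phi_\epsilon-\phi_0=0 \quad\text{on }\partial D,
\]
so the Green function of $\Delta-\epsilon u$ gives the \emph{exact} identity
\[
  \phi_\epsilon(z)=\phi_0(z)+\epsilon\int_D u\,\phi_0\,g^*_z\,dA,
\]
and then a single substitution of $g^*_z=g_z+O(\epsilon)$ from~\eqref{series} finishes. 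This is shorter and yields a closed formula valid to all orders, from which higher variations could be read off. Your boundary-integral approach is a legitimate alternative and has the virtue of exhibiting the result as a direct corollary of theorem~\ref{NormalTheorem}; the price is that you must justify Fubini and, more seriously, upgrade the $o(\epsilon)$ in theorem~\ref{NormalTheorem} from ``uniform in $\zeta$ for fixed base point'' to ``uniform in both $\zeta$ and the base point $z$.'' You flag this honestly, and your sketch (symmetry of $g$, continuity of the constants in~\eqref{series}) is the right idea, but note that the paper's route sidesteps this difficulty: there the remainder is $\epsilon\int_D u\,\phi_0\,(g^*_z-g_z)\,dA$, and the series~\eqref{series} together with the symmetry $g(\xi,z)=g(z,\xi)$ bounds this directly without passing through normal derivatives.
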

\begin{proof}
  Let $g^*_w$ denote the Green function of the operator $\Delta-\epsilon u$ in the region $D$.
  Note that
  \begin{equation*}
    \begin{cases}
      (\Delta-\epsilon u)(\phi_\epsilon - \phi_0) = \epsilon u \phi_0 & \textrm{ in } D \\
      \phi_\epsilon - \phi_0 = 0 & \textrm{ on } \partial D,
    \end{cases}
  \end{equation*}
  whence an expression for $\phi_\epsilon$
  is given by
  \begin{equation*}
    \phi_\epsilon(z) = \phi_0(z) + \epsilon\int_D u\phi_0 g^*_z\, dA.
  \end{equation*}
  From the perturbation formula \eqref{series} we have
  \begin{equation*}
    \phi_\epsilon(z) = \phi_0(z) + \epsilon\int_D u\phi_0 g_z\, dA + o(\epsilon),
  \end{equation*}
  as desired.
\end{proof}
\begin{corollary}
  With the same assumptions as the previous theorem, the linearization of $\phi_\epsilon$ has the pointwise bound
  \begin{equation*}
    |\delta\phi_\epsilon(z)| \leq \|u\|_2\|g_z\|_2 \|f\|_\infty.
  \end{equation*}
\end{corollary}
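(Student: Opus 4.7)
The plan is to read the linearization directly off the previous theorem and then apply two elementary estimates. From the expansion
\[ \phi_\epsilon(z) = \phi_0(z) + \epsilon \int_D u\phi_0 g_z\, dA + o(\epsilon), \]
we identify
\[ \delta\phi_\epsilon(z) = \int_D u\phi_0 g_z\, dA, \]
so the task reduces to bounding this single integral in terms of $\|u\|_2$, $\|g_z\|_2$, and $\|f\|_\infty$.

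First I would invoke the maximum principle for harmonic functions: since $\phi_0$ solves the classical Dirichlet problem on $D$ with boundary data $f$, and the boundary is smooth analytic (so every boundary point is regular), the Perron solution $\phi_0$ satisfies $|\phi_0(w)| \leq \|f\|_\infty$ throughout $D$. Pulling this uniform bound outside the absolute value gives
\[ |\delta\phi_\epsilon(z)| \leq \|f\|_\infty \int_D |u(w)|\, |g_z(w)|\, dA(w). \]

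Next I would apply the Cauchy--Schwarz inequality to the remaining integral, treating $u$ and $g_z$ as elements of $L^2(D)$. This yields
\[ \int_D |u|\,|g_z|\, dA \leq \|u\|_2 \|g_z\|_2, \]
and combining with the previous display gives the claimed bound. The only subtle point to verify is that $g_z \in L^2(D)$, but this is standard for the Green function of the Laplacian on a bounded planar domain: its only singularity is the logarithmic one at $z$, and $|\log|\cdot||^2$ is locally integrable in the plane, so $\|g_z\|_2$ is indeed finite.

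No step presents a real obstacle; the argument is essentially three lines stacked together. The only minor care needed is the assertion $|\phi_0| \leq \|f\|_\infty$ when $f$ is merely a bounded Borel function rather than continuous, which follows from the Perron--Wiener--Brelot construction on a domain with regular boundary.
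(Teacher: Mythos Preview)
Your argument is correct and matches the paper's own proof exactly: the paper invokes the maximum modulus principle to get $\sup_D|\phi_0|\le\sup_{\partial D}|f|$ and then applies Cauchy--Schwarz, which is precisely what you do (with the added detail that $g_z\in L^2(D)$ and the remark about bounded Borel $f$).
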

\begin{proof}
  From the maximum modulus principle for harmonic functions, $\sup_D |\phi_0| \leq \sup_{\partial D} |f|$. The result follows from this and the
  Cauchy-Schwarz inequality.
\end{proof}

\subsection{Elliptic Growth and an Inverse Problem}

Consider a bounded domain $D\subset \C$ with smooth, analytic boundary and fix $w\in D$. Given a positive
function $u\in C^\infty(\overline{D})$, an elliptic growth process governed by the Schr\"odinger operator $\Delta - u$
is a nested sequence of domains
$\{D(t): t\geq 0\}$ such that $D(0) = D$ and
\begin{equation*}
  V_n(\zeta) = \partial_n g^*_w(\zeta),
\end{equation*}
where $V_n$ denotes the outward normal velocity of the boundary $\partial D(t)$ and $g^*_w$ denotes the Green function of the operator
$\Delta - u$ for the domain $D(t)$. That is, the Green function ``pushes'' the boundary of the domain outward, changing the domain---and hence
the Green function---in a nonlinear, nonlocal manner. In beginning to understand the nuances of this process, we ask the following question.
Which families of nested, growing domains can be produced by an elliptic growth process?

Our first approach to addressing this question is to consider the local problem. Given the initial domain $D$ we seek
which velocity fields $V$ can emerge from different choices of the function $u$. The intuition is as follows. Let $V_0$ be the velocity
field that arises from elliptic growth with $u\equiv 0$ (this is the well--known Laplacian growth process). If $V$ is another
velocity field close to $V_0$, can we choose $u\sim 0$ so that $V$ arises from elliptic growth via $\Delta - u$? In other words,
if we view the variation as a linear map $u\mapsto \delta\partial_n g^*_w$ on an appropriate function space, what is the image
of this map? We are led to the following theorem.
\begin{theorem}
  Let $D \subset \C$ be a bounded domain with smooth, analytic boundary and fix $w\in D$.
  Define
  \begin{equation*}
     Au(\zeta) = \int_D u(z)g(z,w)P(z,\zeta)\, dA(z).
  \end{equation*}
  Then $A$ is a linear map $L^2(D)\to L^2(\partial D)$ with dense range.
\end{theorem}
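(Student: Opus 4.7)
The plan is to work via duality: compute the Hilbert--space adjoint $A^* : L^2(\partial D) \to L^2(D)$, establish that $A^*$ is injective, and then invoke the identity $\overline{\operatorname{Range}(A)} = \ker(A^*)^\perp$ to conclude density. Boundedness of $A$ (required to form the adjoint) will fall out of the same kernel estimates.

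By Fubini, for $u\in L^2(D)$ and $v\in L^2(\partial D)$,
\begin{equation*}
  \langle Au, v\rangle_{L^2(\partial D)} = \int_D u(z)\, g(z,w) \left[\int_{\partial D} P(z,\zeta) v(\zeta)\, ds(\zeta)\right] dA(z),
\end{equation*}
and the bracketed quantity is exactly $H[v](z)$, the Poisson integral---the harmonic extension of $v$ into $D$. Hence $A^* v(z) = g(z,w)\, H[v](z)$. Two successive applications of Cauchy--Schwarz reduce boundedness of $A^*$ to finiteness of
\begin{equation*}
  \int_D g(z,w)^2 \int_{\partial D} P(z,\zeta)^2\, ds(\zeta)\, dA(z),
\end{equation*}
which holds because $|g(z,w)|$ vanishes linearly in the distance to $\partial D$ (Hopf lemma) while $\int_{\partial D} P(z,\zeta)^2\, ds(\zeta)$ grows at most like the reciprocal of that distance; near $z=w$ the logarithmic singularity of $g$ is locally $L^2$ and $P$ is bounded.

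For injectivity, suppose $A^* v = 0$; then $g(z,w)\, H[v](z) = 0$ almost everywhere in $D$. Because $g(\cdot,w)$ is strictly negative on $D\setminus\{w\}$ by the maximum principle, the harmonic function $H[v]$ must vanish identically on $D$. Appealing to the classical $L^2$ boundary theory for the Poisson integral on smooth domains---which recovers $v$ as the nontangential (or $L^2$) boundary limit of $H[v]$---this forces $v=0$ in $L^2(\partial D)$. Thus $\ker(A^*)=\{0\}$ and $\overline{\operatorname{Range}(A)} = L^2(\partial D)$.

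The main obstacle is the concluding passage from $H[v]\equiv 0$ inside $D$ to $v\equiv 0$ on $\partial D$: this is where the smoothness/analyticity of $\partial D$ plays a concrete role beyond the Green--function identities already developed, and one must cite cleanly the Fatou--type boundary--behavior theorem. Verifying boundedness in Step~2 also warrants care regarding the competing rates of blow--up of $P$ and decay of $g$ near $\partial D$, though the integral bound above makes this bookkeeping routine.
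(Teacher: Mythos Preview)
Your argument is correct and is essentially the paper's own proof: both show that any $v\in L^2(\partial D)$ orthogonal to the range of $A$ must have vanishing Poisson extension $H[v]$ in $D$ (using $g_w<0$ to strip off the Green factor), and then recover $v=0$ from the boundary behaviour of harmonic functions on smooth domains. The only minor difference is in the boundedness step---the paper checks directly that $g_wP_\zeta$ is bounded near $\zeta$ via a conformal map to the disk, whereas you balance the linear vanishing of $g_w$ (Hopf) against the $O(d(z,\partial D)^{-1})$ blow-up of $\int_{\partial D}P(z,\zeta)^2\,ds$---but this is a cosmetic variation, not a different strategy.
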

\begin{proof}
  To prove $A$ is defined on all of $L^2(D)$ it suffices to show that $g_wP_\zeta\in L^2(D)$ for each $w\in D$ and $\zeta\in \partial D$.
  Away from $w$ and $\zeta$ the function $g_wP_\zeta$ is smooth; furthermore, the singularity at $w$
  is logarithmic, which is square--integrable. It remains to understand the singularity at $\zeta$. Having assume the boundary of $D$ is
  smooth and analytic, there is a conformal map from a neighborhood of $\zeta$ to the unit disk which transforms the Poisson kernel
  and Green function canonically. Hence it suffices to understand the singularity at $\zeta$ in the case when $D=\D$, the unit disk.
  Without loss of generality, we can assume $w=0$ temporarily; a M\"obius transform can preserve the disk and send $w$ to 0,
  which doesn't disturb the behavior of the singularity at $\zeta$. Note that
  \begin{equation*}
    g_0(z)P(z,\zeta) = \frac{\ln|z|}{4\pi^2}\Re\left(\frac{\zeta+z}{\zeta-z}\right).
  \end{equation*}
  Taking the limit $z\to \zeta$ gives
  \begin{equation*}
    \lim_{z\to \zeta} g_0(z) P(z,\zeta) = -\frac{1}{2\pi^2},
  \end{equation*}
  so $g_0P_\zeta$ remains bounded near $\zeta$.
  At this point we return to the general domain $D$;
  we can write $g_w(z)P(z,\zeta) = h_\zeta(z)$, a bounded function in a neighborhood of $\overline{D}$
  except at $w$, where it has a logarithmic singularity.
  We conclude $g_wP_\zeta\in L^2(D)$.
  
  Next we show that $A$ maps into $L^2(\partial D)$. Note that
  \begin{equation*}
     \int_{\partial D} |Au|^2\, ds \leq \int_{\partial D} \|u\|_2 \|h_\zeta\|_2 \, ds(\zeta)
     \leq \|u\|_2 \sup_{\zeta\in \partial D} \|h_\zeta\|_2 \cdot \mathrm{length}(\partial D),
  \end{equation*}
  so it suffices to show that $\sup_{\partial D} \|h_\zeta\|_2 < \infty$. This follows easily from the fact that
  $\zeta \mapsto \|h_\zeta\|_2$ is a continuous function on the compact set $\partial D$.
  
  Next we show that the range of $A$ is dense in $L^2(\partial D)$. Suppose $v\in L^2(\partial D)$ is chosen so that
  \begin{equation*}
    \int_{\partial D} v(\zeta) (Au)(\zeta)\, ds(\zeta) = 0
  \end{equation*}
  for all $u\in L^2(D)$. Using the definition of $A$ and swapping integrals gives
  \begin{equation*}
    \int_D u(z) \left(\int_{\partial D} v(\zeta) h_\zeta(z)\, ds(\zeta)\right)\, dA(z) = 0.
  \end{equation*}
  Since $u$ is arbitrary, the inner integral must vanish for almost all $z\in D$.
  Note that $h_\zeta(z) = g_w(z)P(z,\zeta)$ and $g_w(z) < 0$ for all $z\in D$, so
  \begin{equation*}
    \int_{\partial D} v(\zeta) P(z,\zeta)\, ds(\zeta) = 0
  \end{equation*}
  for almost all $z\in D$. This integral is a harmonic function in $z$ which is zero almost everywhere in $D$, hence
  everywhere by continuity. The domain is regular enough so that, when $z$ approaches $\partial D$, we recover the function $v$.
  Hence $v=0$ in $L^2(\partial D)$. This shows the orthogonal complement of the range of $A$ is trivial, so the range of $A$
  is dense in $L^2(\partial D)$.
\end{proof}
For physical reasons, we might prefer function spaces different from $L^2$. One natural candidate for velocity fields is $L^1(\partial D)$,
since the $L^1$ distance between velocity functions measures the difference in areas due to the domain growing in different ways.
Furthermore we could consider only smooth perturbation functions $u$, in the spirit of the original perturbation result. To these ends we have
the following corollary.
\begin{corollary}
  With the same hypotheses as the previous theorem, $A$ maps $C^\infty(\overline{D})\to L^1(\partial D)$ and has dense range.
\end{corollary}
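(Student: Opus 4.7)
The plan is to bootstrap this corollary from the previous theorem using only standard density arguments and the finiteness of $\mathrm{length}(\partial D)$.

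First I would check that $A$ is well-defined on $C^\infty(\overline{D})$ with values in $L^1(\partial D)$. Since $D$ is bounded, $C^\infty(\overline{D})\subset L^2(D)$, so by the previous theorem $Au \in L^2(\partial D)$ for every $u\in C^\infty(\overline{D})$. Because $\partial D$ has finite length, the Cauchy--Schwarz inequality gives the continuous inclusion $L^2(\partial D)\hookrightarrow L^1(\partial D)$, so $Au\in L^1(\partial D)$ as required.

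Next I would establish density of the range. The strategy is a three-step approximation chain: approximate an arbitrary $f\in L^1(\partial D)$ first by an element of $L^2(\partial D)$, then by an element of the $L^2$-range $A(L^2(D))$, and finally by an element of $A(C^\infty(\overline{D}))$. Concretely, fix $f\in L^1(\partial D)$ and $\epsilon>0$. Since $\partial D$ has finite length, $L^2(\partial D)$ is dense in $L^1(\partial D)$, so pick $f'\in L^2(\partial D)$ with $\|f-f'\|_{L^1}<\epsilon/3$. By the previous theorem, the range of $A:L^2(D)\to L^2(\partial D)$ is $L^2$-dense, so choose $v\in L^2(D)$ with $\|Av-f'\|_{L^2}$ small enough that, after passing through the Cauchy--Schwarz bound $\|\cdot\|_{L^1}\le\sqrt{\mathrm{length}(\partial D)}\,\|\cdot\|_{L^2}$, we obtain $\|Av-f'\|_{L^1}<\epsilon/3$. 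Finally, since $C^\infty(\overline{D})$ is $L^2$-dense in $L^2(D)$ and $A$ is bounded from $L^2(D)$ to $L^2(\partial D)$, pick $u\in C^\infty(\overline{D})$ with $\|Au-Av\|_{L^2}$ small enough that $\|Au-Av\|_{L^1}<\epsilon/3$. Combining via the triangle inequality gives $\|Au-f\|_{L^1}<\epsilon$.

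There is no genuine obstacle here; the content of the corollary is just the compatibility of the $L^2$ result with the weaker $L^1$ topology on $\partial D$ and the smaller domain $C^\infty(\overline{D})$. The only point that deserves a brief mention is the continuity of $A:L^2(D)\to L^2(\partial D)$, which is built into the previous theorem's proof via the uniform bound $\sup_{\zeta\in\partial D}\|h_\zeta\|_2<\infty$ and a repetition of the $L^2$ estimate $\int_{\partial D}|Au|^2\,ds\le \|u\|_2^2\,\sup_\zeta\|h_\zeta\|_2^2\,\mathrm{length}(\partial D)$; I would cite this rather than reprove it.
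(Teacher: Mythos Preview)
Your argument is correct and is precisely the approach the paper takes: the paper's proof consists of the two sentences ``Since $\partial D$ has finite measure, $L^2(\partial D)$ is dense in $L^1(\partial D)$. Furthermore $C^\infty(\overline{D})$ is dense in $L^2(D)$, and the result follows,'' and your write-up is simply the natural unpacking of those two density facts together with the boundedness of $A$.
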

\begin{proof}
  Since $\partial D$ has finite measure, $L^2(\partial D)$ is dense in $L^1(\partial D)$. Furthermore $C^\infty(\overline{D})$ is dense
  in $L^2(D)$, and the result follows.
\end{proof}
The inverse problem is not fully solved with these results. First we should only consider nonnegative functions $u$ and characterize the
range of the map $A$. Secondly, the restriction of the domain of $A$ to smooth functions begs the following question. Can every
smooth function in $L^1(\partial D)$ be obtained from $A$ when restricted to $C^\infty(\overline{D})$? Finally, which of these results
can be carried over to elliptic growth governed by a Laplace--Beltrami operator? In that setting the relevant integrals are more delicate and
require further analysis. All of these problems provide directions for future work.

\bibliographystyle{amsplain}

		 \textsc{Department of Mathematics,
		 University of California, Santa Barbara,
		 CA 93106} \\
\textit{E-mail:} \texttt{cmart07@math.ucsb.edu}

\end{document}